\begin{document}


\maketitle


\footnotefirstpage


\begin{abstract}
	In this note we present a recipe which transforms
	any pull-push along a span of categories into a push-pull
	along a cospan and vice versa, based on a theorem from Guitart. 
\end{abstract}

\section*{A recipe for base change}

Consider a lax commutative square
\[
	\begin{tikzcd}[ampersand replacement=\&]
		\category A
		\arrow[r, "s"]
		\arrow[d, "t", swap]
			\& \category B
			\ar[dl, phantom, "\rotatebox{220}{\( \Longrightarrow \)}"]
			\arrow[d, "f"] \\
		\category C
		\arrow[r, "g", swap]
			\&  \category D
	\end{tikzcd}
\]
of small categories and a bicomplete category \( \cosmos \).
The natural transformation
\( fs \implies gt \) induces a \emph{base change natural transformation}
\[
	\PushSum t \Pull s \implies \Pull g \PushSum f
\]
between the pull-push \( \PushSum t \Pull s \) and the
push-pull \( \Pull g \PushSum f \) which both send functors
\( \category B \to \cosmos \) to functors \( \category C \to \cosmos \). 
Dually, using right extensions instead of left extensions, one obtains
another base change transformation
\( \Pull f \PushProduct g \implies \PushProduct s \Pull t \). 

In this note, we address two questions:
\begin{itemize}
	\item
		Given a span of categories, which category should one use
		\[
			\begin{tikzcd}[ampersand replacement=\&]
				\category A
				\arrow[r, "s"]
				\arrow[d, "t", swap]
					\& \category B
					\ar[dl, phantom, "\rotatebox{220}{\(
					\Longrightarrow \)}"] \arrow[d] \\
				\category C
				\arrow[r]
					\& \boxed{?}
			\end{tikzcd}
		\]
		so that the base change transformation be assured
		to be an isomorphism? In other words: is it possible to
		systematically rewrite a pull-push along a span of categories
		\( \category C \leftarrow \category A \to \category B \) as
		a push-pull along a cospan
		\( \category C \to \boxed{?} \leftarrow \category B \)
		of categories? 
	\item
		Given a cospan of categories, which category should one use
		\[
			\begin{tikzcd}[ampersand replacement=\&]
				\boxed{?}
				\arrow[r]
				\arrow[d]
				\&  \category B
				\ar[dl,
				phantom, "\rotatebox{220}{\( \Longrightarrow \)}"]
				\arrow[d, "f"] \\
				\category C
				\arrow[r, "g" swap]
				\& \category D
			\end{tikzcd}
		\]
		so that the base change transformation be assured
		to be an isomorphism? In other words: is it possible to
		systematically rewrite a pull-push along a cospan of categories
		\( \category C \to \category D \leftarrow \category B \) as
		a push-pull along a span
		\( \category C \leftarrow \boxed{?} \to \category B \)
		of categories? 
\end{itemize}

We give an affirmative answer to both questions, using a theorem of
Guitart:
one can always replace the box
\( \boxed{?} \) with the category \emph{universally fitting}
in the lax square diagram.
In the first case, the category universally fitting in the lax square
has seldom been called `co-comma': we shall rename it
\emph{flow sum} for the occasion; it is obtained by
adding arrows \( s(a) \to t(a) \) to the disjoint union
\( \category B \amalg \category C \).
In the second case, the category
universally fitting in the lax square is usually called
`comma category', we rename it \emph{flow product}.

\begin{theorem*}[{\cite[1.14]{zbMATH03779585}}]
	The base change natural transformations
	\[
		\PushSum t \Pull s \implies \Pull g \PushSum f
		\qand
		\Pull f \PushProduct g \implies \PushProduct s \Pull t
	\]
	are isomorphisms when the lax square is
	\begin{itemize}
		\item a flow sum square;
		\item or a flow product square.
	\end{itemize}
\end{theorem*}

Guitart studied base change transformations in the late 1970s;
more recently
his results have been extended to higher categories.
Maltsiniotis gave an extension to derivators
\cite{arXiv:1101.4144}.
Then,
Gepner, Haugseng and Kock showed that the base change
transformation coming from a commutative square of ∞\=/groupoids
is an equivalence \emph{if and only if} the square
is a fibre product
\cite[2.1.6]{arXiv:1712.06469}.

The note is organised in three sections.
In the first section,
we give the definitions of the base change natural transformation,
the flow sum and the flow product.
The second section is dedicated to proving the results, reducing
to the case where \( \category C \) is punctual.
In the last section,
we give a direct application of the base change isomorphism,
describing a context where the pull-push along a span of categories
is functorial.

\section{Base change}

\subsection{Pulling and pushing}

Let us fix a bicomplete category \( \cosmos \).
Given a functor \( f \from \category A \to \category B \) between two
small categories,
composition by \( f \) induces a functor
\[
	\begin{tikzcd}
		\fun {\category A} \cosmos
		&
		\fun {\category B} \cosmos
		\lar["\Pull f"']
	\end{tikzcd}
\]
called the \emph{pull-back} functor.
Since \( \cosmos \) is bicomplete the pull-back functor admits two
\emph{push-forward} adjoints.
We shall denote them
\( \PushSum f \adj \Pull f \adj \PushProduct f \).

\subsection{Base change along lax squares}

Consider a lax square of small categories, functors
and natural transformations
\[
	\begin{tikzcd}[ampersand replacement=\&]
		\category A
		\arrow[r, "s"]
		\arrow[d, "t", swap]
		\&  \category B
		\ar[dl, phantom, "\rotatebox{220}{\( \Longrightarrow \)}"]
		\arrow[d, "f"] \\
		\category C
		\arrow[r, "g", swap]
		\&  \category D
	\end{tikzcd}
\]
the natural transformation \( f s \implies g t \)
induces another natural transformation
\( \Pull s \Pull f \implies \Pull t \Pull g \). Using the
unit of the adjunction \( \PushSum f \adj \Pull f  \) and the
counit of the adjunction \( \PushSum t \adj \Pull t \), one obtains
\[
	\PushSum t \Pull s
	\implies \PushSum t \Pull s \Pull f \PushSum f
	\implies \PushSum t \Pull t \Pull g \PushSum f
	\implies \Pull g \PushSum f
\]
a natural transformation
\( \PushSum t \Pull s \implies \Pull g \PushSum f \) that we shall
call the \emph{base change} natural transformation.
In a similar way,
one also obtains a base change formula
\( \Pull f \PushProduct g \implies \PushProduct s \Pull t \).

\begin{remark}
	The natural transformation \( fs \implies gt \) also induces
	a natural transformation
	\( \PushSum g \PushSum t \implies \PushSum f \PushSum s \) which
	in turns gives us an alternative way
	\[
		\PushSum t \Pull s
		\implies \Pull g \PushSum g \PushSum t \Pull s
		\implies \Pull g \PushSum f \PushSum s \Pull s
		\implies \Pull g \PushSum f
	\]
	of writing the base change natural transformation.
\end{remark}

\subsection{Flow sum}

\begin{definition}
	The flow sum of a span of categories
	\( \category C \xleftarrow{t} \category A
	\xrightarrow{s} \category B \),
	is the category
	\( \FlowSum {\category B} {\category A} {\category C} \)
	obtained by enlarging the disjoint union of \( \category B \)
	and \( \category C \)
	\[
		\category B \amalg \category C
		\subset \FlowSum {\category B} {\category A} {\category C}
	\]
	with morphisms \( s(a) \xrightarrow{a} t(a) \)
	for each object \( a \) in \( \category A \) and relations generated
	by morphisms in \( \category A \).

	Concretely,
	\begin{itemize}
		\item
			the set of objects of
			\( \FlowSum {\category B} {\category A} {\category C} \)
			is the disjoint union
			of the set of objects of \( \category B \)
			and the set of objects of \( \category C \);
			\[
				\Objects
				{\FlowSum {\category B} {\category A} {\category C}}
				\coloneqq
				\Objects {\category B}
				\amalg
				\Objects {\category C}
			\]
		\item
			the sets of arrows are as follows. On the one hand, we have
			\begin{align*}
				\relhom{
					\FlowSum
					{\category B}
					{\category A}
					{\category C}
					}{
					b
					}{
					b'
					}
				&\coloneqq \relhom{\category B}{b}{b'}
				\\
				\relhom{
					\FlowSum
					{\category B}
					{\category A}
					{\category C}
					}{
					c
					}{
					c'
					}
				&\coloneqq \relhom{\category C}{c}{c'}
				\\
				\relhom{
					\FlowSum
					{\category B} {\category A} {\category C}
					}{
					c
					}{
					b
					}
				&\coloneqq \emptyset
			\end{align*}
			for any objects \( b,b' \) in \( \category B \)
			and \( c,c' \) in \( \category C \).

			On the other hand
			\[
				\relhom{
					\FlowSum {\category B} {\category A} {\category C}
					}{
					b
					}{
					c}
					\coloneqq \GenSet {\psi a \phi}
			\]
			is generated by words of the
			form \( \psi a \phi \) where \( a \)
			is an object of \( \category A \),
			\( \phi \from b \to s(a) \) is an arrow in \( \category B \)
			and \( \psi \from t(a) \to c \)
			is an arrow in  \( \category C \).
			All theses morphisms are subject to an equivalence relation 
			generated by
			\[
				(\psi' \circ t(\theta)) a \phi
				\sim \psi' a' (s(\theta) \circ \phi)
			\]
			for every morphism \( \theta \from a \to a' \)
			in \( \category A \), every morphism \( \psi' \) 
			in \( \category C \) with source \( t(a') \)
			and every morphism \( \phi \)
			in \( \category B \) with target \( s(a) \).
	\end{itemize}
	Composition of morphisms is given by the composition in
	\( \category B \)
	and the composition in \( \category C \) in the obvious way.
\end{definition}

\begin{figure}
	\[
		\begin{tikzcd}
			b
			\rar["\phi"]
			&
			s(a)
			\rar["a"]
			&
			t(a)
			\rar["\psi"]
			&
			c
		\end{tikzcd}
	\]
	\caption{A morphism from \( b \) to \( c \) in the flow sum.}
\end{figure}

\begin{remark}
	The relation \( \sim \) is not transitive; two three letters
	words \( \psi a \phi \) and \( \psi' a' \phi' \) are thus
	equivalent if and only if there is a commutative hammock
	between them%
	~[\cref{fig: hammock}].
\end{remark}

\begin{figure}
	\[
		\begin{tikzcd}
				& s(a_0) \ar[r,"a_0"] \ar[d,"s(\theta_1)"]
					& t(a_0)
					\ar[d,"t(\theta_1)",swap]
					\ar[ddr, "\psi_0"] \\
				& s(a_1) \ar[r,"a_1",swap]
					& t(a_1) \ar[dr] \\
			b
			\ar[uur, "\phi_0"]
			\ar[ur]
			\ar[ddr,"\phi_n" swap]
				& \vdots \ar[u,"s(\theta_2)",swap]
					& \vdots \ar[u,"t(\theta_2)"]
						& c \\
				& \vdots \ar[d,"s(\theta_n)"] 
					& \vdots \ar[d,"t(\theta_n)",swap] \\
				& s(a_n) \ar[r,"a_n",swap]
					& t(a_n)
					\ar[uur, "\psi_n" swap]
		\end{tikzcd}
	\]
	\caption{A commutative hammock between \( \psi_0 a_0 \phi \)
		and \( \psi_n a_n \phi_n \); they represent the same
		arrow \( b \to c \) in the flow sum.
		}
	\label{fig: hammock}
\end{figure}

\begin{remark}
\label{remark : flow sum universal}
	The flow sum
	\( \FlowSum {\category B} {\category A} {\category C} \) fits
	in the lax commutative diagram
	\[
		\begin{tikzcd}[ampersand replacement=\&]
			\category A
			\arrow[r, "s"]
			\arrow[d, "t", swap]
			\&  \category B
			\ar[dl, phantom, "\rotatebox{220}{\( \Longrightarrow \)}"]
			\arrow[d] \\
			\category C
			\arrow[r]
			\& \FlowSum {\category B} {\category A} {\category C} 
		\end{tikzcd}
	\]
	and is universal for that property. We shall call such a lax
	square a \emph{flow sum square}.
	Notice that the flow sum is
	in general different from the lax push-out of the same span.
\end{remark}

\subsection{Flow product}

\begin{definition}
	The flow product of a cospan of categories
	\( \category B \xrightarrow{f} \category D \xleftarrow{g}
	\category C \) is the category
	\( \FlowProduct {\category B} {\category D} {\category C} \)
	\begin{itemize}
		\item
			whose objects are the triples \( (b, c, \eta) \)
			with \( b \in  {\category B} \),
			\( c \in  {\category C} \)
			and \( \eta \from f(b) \to g(c) \)
			a map in \( {\category D} \);
		\item
			whose morphisms \( (b,c,\eta) \to (b',c',\eta') \) are pairs
			\( (\phi, \psi) \) where \( \phi \from b \to b' \)
			is a morphism in \( \category B \)
			and \( \psi \from c \to c' \)
			is a morphism in \( \category C \),
			such that the following square
			\[
				\begin{tikzcd}[ampersand replacement=\&]
					f(b)
					\arrow[r, "f(\phi)"]
					\arrow[d, "\eta", swap]
						\& f(b') \arrow[d, "{\eta'}"] \\
					g(c)
					\arrow[r, "g(\psi)", swap]
						\& g(c')
				\end{tikzcd}
			\]
			commutes.
	\end{itemize}
	Composition is given by the formula \( (\phi, \psi) \circ
	(\phi', \psi') = (\phi \circ \phi', \psi \circ \psi') \) for
	two composable morphisms \( (\phi, \psi) \)
	and \( (\phi', \psi') \).
\end{definition}

\begin{remark}
\label{remark : flow product universal}
	The flow product
	\( \FlowProduct {\category B} {\category D} {\category C} \)
	fits in the lax commutative diagram
	\[
		\begin{tikzcd}[ampersand replacement=\&]
			\FlowProduct {\category B} {\category D} {\category C}
			\arrow[r]
			\arrow[d]
				\& \category B
				\ar[dl, phantom,
				"\rotatebox{220}{\( \Longrightarrow \)}"]
				\arrow[d, "f"] \\
			\category C
			\arrow[r, "g", swap]
				\&  \category D
		\end{tikzcd}
	\]
	and is universal for that property.
	We shall call such a lax square a \emph{flow product square}.
	Notice that the flow product
	is in general different from the lax product of the same cospan.
	The flow product is also called the \emph{comma category} and
	is alternatively denoted \( (f,g) \), \( f/g \) or
	\( {\category B} \downarrow_{\category D} {\category C} \) in the
	literature.
\end{remark}

\begin{remark}
	For any two categories \( \category C \) and \( \category B \),
	one can define the category of cospans from \( \category B \)
	to \( \category C \) whose objects are cospans of functors
	\( f \from \category B \to
	\category D \leftarrow \category C \cofrom g \)
	and whose morphisms from \( (f,\category D,g) \)
	to \( (f',\category D',g') \) are functors
	\( h \from \category D \to \category D' \)
	so that \( f' = hf \) and \( g' = hg \).
	One can define in a similar
	way the category \( \Span {\category B} {\category C} \). 
	The construction
	\( \category D \mapsto \FlowProduct {\category B}
	{\category D}{\category C} \)
	may be enhanced into a functor from 
	\( \CoSpan {\category B} {\category C} \)
	to \( \Span {\category B} {\category C} \).
	Similarly, the construction
	\( \category A \mapsto \FlowSum {\category B}
	{\category A}{\category C} \) may be enhanced into
	a functor in the other way.
	The universal properties of the flow sum%
	~[\ref{remark : flow sum universal}]
	and the flow product%
	~[\ref{remark : flow product universal}]
	yield
	\[
		\begin{tikzcd}[ampersand replacement=\&, column sep = 70]
			\Span {\category B} {\category C}
			\arrow[r, shift left=2,"\mathrm{Flow~sum}"]
				\&
				\CoSpan {\category B} {\category C}
				\arrow[l, shift left=2, "\mathrm{Flow~product}"]
		\end{tikzcd}
	\]
	an adjunction between the flow sum functor and the flow product
	functor.
\end{remark}

\begin{remark}
	The fibre product is naturally a full subcategory
	\[
		 \category B \times_{\category D} \category C 
		 	\subset
		 	\FlowProduct {\category B} {\category D} {\category C} 
	\]
	of the flow product: a triple \( (b, c, \eta) \) belongs to
	the fibre product when \( \eta \) is an identity morphism.
\end{remark}

\subsection{Base change theorem}

\begin{theorem}
	Given a lax commutative square
	\[
		\begin{tikzcd}[ampersand replacement=\&]
			\category A
			\arrow[r, "s"]
			\arrow[d, "t", swap]
				\& \category B
				\ar[dl, phantom, "\rotatebox{220}{\(
				\Longrightarrow \)}"]
				\arrow[d, "f"] \\
			\category C
			\arrow[r, "g", swap]
				\&  \category D
		\end{tikzcd}
	\]
	of small categories, 
	the base change natural transformations
	\[
		 \PushSum t \Pull s \implies \Pull g \PushSum f 
		 	\qand
		 	\Pull f \PushProduct g \implies \PushProduct s \Pull t 
	\]
	are isomorphisms when the above square is either
	\begin{itemize}
		\item
			a flow sum square;
		\item
			or a flow product square.
	\end{itemize}
\end{theorem}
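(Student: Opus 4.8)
The plan is to prove the statement for the first base change transformation \( \PushSum t \Pull s \implies \Pull g \PushSum f \) and to deduce the second from it by duality. Replacing \( \cosmos \) with \( \cosmos^{\mathrm{op}} \) exchanges the two push-forwards, \( (\PushSum h)^{\mathrm{op}} \cong \PushProduct{h^{\mathrm{op}}} \) and \( (\PushProduct h)^{\mathrm{op}} \cong \PushSum{h^{\mathrm{op}}} \), while fixing pull-backs; passing to opposite categories and transposing the square then identifies the opposite of the first base change transformation of a square with the second base change transformation of the transposed opposite square. Since one checks directly that \( \FlowSum {\category B} {\category A} {\category C}^{\mathrm{op}} \cong \FlowSum {\category C^{\mathrm{op}}} {\category A^{\mathrm{op}}} {\category B^{\mathrm{op}}} \) and \( \FlowProduct {\category B} {\category D} {\category C}^{\mathrm{op}} \cong \FlowProduct {\category C^{\mathrm{op}}} {\category D^{\mathrm{op}}} {\category B^{\mathrm{op}}} \), the transposed opposite of a flow sum (resp.\ flow product) square is again a flow sum (resp.\ flow product) square. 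Hence it suffices to prove that the first base change transformation is invertible for flow sum squares and for flow product squares.

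A natural transformation between functors valued in \( \fun {\category C} \cosmos \) is invertible exactly when each of its components at an object \( c \) of \( \category C \) is, and this component is the base change transformation followed by \( \Pull c \) for \( c \from \ast \to \category C \); this is the announced reduction to a punctual \( \category C \). Using that \( \cosmos \) is cocomplete, the pointwise formula for the left extension rewrites the two sides at \( c \) as
\[
	(\PushSum t \Pull s X)(c) \cong \mathrm{colim}_{\, t \downarrow c} \, X \circ s \circ \pi
	\qand
	(\Pull g \PushSum f X)(c) \cong \mathrm{colim}_{\, f \downarrow g(c)} \, X \circ \pi ,
\]
colimits over the comma categories \( t \downarrow c \) and \( f \downarrow g(c) \); the base change transformation at \( c \) is the canonical comparison induced by the functor
\[
	J_c \from t \downarrow c \longrightarrow f \downarrow g(c) , \qquad
	(a, \alpha) \longmapsto \bigl( s(a), \, g(\alpha) \circ \alpha_a \bigr) ,
\]
where \( \alpha_a \) denotes the component at \( a \) of the structural \( 2 \)-cell. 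As the comparison depends only on \( J_c \), it will be an isomorphism for every \( X \) and every bicomplete \( \cosmos \) as soon as \( J_c \) induces an isomorphism on colimits of the relevant diagrams; and since such conditions are detected on \( \cosmos = \mathrm{Set} \), a single combinatorial verification at each \( c \) settles all targets at once.

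For a flow sum square one has \( \category D = \FlowSum {\category B} {\category A} {\category C} \), the functors \( f, g \) are the canonical inclusions, and the objects of \( f \downarrow g(c) \) are the pairs \( (b, \xi) \) with \( \xi \from b \to c \) a morphism of the flow sum, that is, a class of words \( \psi a \phi \). Here I would prove that \( J_c \) is a \emph{final} functor, which yields the isomorphism of colimits for every diagram. Finality amounts to the connectedness of each comma category \( (b, \xi) \downarrow J_c \): its objects are precisely the words \( \psi a \phi \) representing \( \xi \), so it is non-empty, and a morphism in it is exactly one elementary step of the relation \( \sim \). Two words thus lie in the same connected component if and only if they are linked by a commutative hammock~[\cref{fig: hammock}], which by definition of the flow sum happens exactly when they represent the same \( \xi \). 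This is the crux of the argument: the non-transitivity of \( \sim \) is absorbed by passing from single arrows to zig-zags, so that the equivalence it generates coincides with connectedness of the comma category.

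For a flow product square one has \( \category A = \FlowProduct {\category B} {\category D} {\category C} \), so that \( t \downarrow c \) has objects \( \bigl( (b, c', \eta), \gamma \from c' \to c \bigr) \) and \( J_c \) sends such an object to \( (b, g(\gamma) \circ \eta) \). This \( J_c \) is not final in general, but it admits a fully faithful right adjoint section
\[
	K_c \from f \downarrow g(c) \longrightarrow t \downarrow c , \qquad
	(b, \beta) \longmapsto \bigl( (b, c, \beta), \mathrm{id}_c \bigr) ,
\]
with \( J_c K_c = \mathrm{id} \) and unit \( \mathrm{id} \implies K_c J_c \) given on \( \bigl( (b, c', \eta), \gamma \bigr) \) by \( (\mathrm{id}_b, \gamma) \); the triangle identities are immediate. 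Being a right adjoint, \( K_c \) is final, and since the diagram \( X \circ s \circ \pi \) on \( t \downarrow c \) restricts along \( K_c \) to the diagram \( X \circ \pi \) on \( f \downarrow g(c) \), the finality isomorphism for \( K_c \) is inverse to the base change comparison, which is therefore invertible. Combining the two verifications with the duality of the first paragraph proves that both base change transformations are isomorphisms in either case. The step I expect to be the real obstacle is the connectedness of the comma categories in the flow sum case, where the hammocks of \cref{fig: hammock} are unavoidable; the flow product case, by contrast, is governed by a clean adjunction.
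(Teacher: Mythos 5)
Your proof is correct, and it is half the paper's argument and half a genuinely different one. The pointwise reduction over each \( c \from \ast \to \category C \) and the entire flow sum case coincide with the paper: your \( J_c \) is exactly the paper's induced functor \( s \from \Flow t c \to \Flow f {g(c)} \), and your hammock-connectedness proof of its finality is precisely Lemma~\ref{thm : cofinal functor flow sum}. Where you diverge is the flow product case: the paper shows the projection \( t \) of the flow product is an opfibration, pulls back along \( c \), and concludes from the punctual case~[\ref{thm: point case}] together with the composition lemma~[\ref{thm: composition of base change natural transformations}] and 2-out-of-3, whereas you produce an explicit right adjoint section \( K_c \) of \( J_c \), use that right adjoints are final, and check that restriction along \( K_c \) carries \( X \circ s \circ \pi \) back to \( X \circ \pi \). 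Your route is shorter and avoids opfibrations and the fibre-versus-flow cofinality lemma entirely; the paper's route has the side benefit of establishing Proposition~\ref{thm: opfibration case} (base change along strict pullbacks of opfibrations) and of never needing to compute the pointwise component of the abstract base change map. That computation is the one step you assert without proof: that the component at \( c \) of the unit/counit composite defining base change is the colimit comparison induced by \( J_c \). It is true, but it is a genuine (if routine) unwinding, and it is exactly what the paper's formal scaffolding --- the composition lemma and the cube in the proof of Proposition~\ref{thm: flow sum case} --- is designed to circumvent; you should either verify it or invoke that lemma as the paper does. Two smaller remarks: your aside about detection on \( \mathrm{Set} \) is dispensable and slightly off in the flow product case (there \( J_c \) is not final, so finality-on-\( \mathrm{Set} \) is not the operative criterion; your actual arguments already work uniformly in any bicomplete \( \cosmos \)), and your duality paragraph is a correct, more detailed expansion of the paper's one-liner of replacing \( \cosmos \) by \( \cosmos\op \).
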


\begin{proof}
	The proof that
	\( \PushSum t \Pull s \implies \Pull g \PushSum f \)
	is an isomorphism can be deduced from the punctual case
	for both the flow product case%
	~[\ref{thm: flow product case}]
	and the flow sum case%
	~[\ref{thm: flow sum case}]; it is the subject of the next section.
	For \( \Pull f \PushProduct g \implies \PushProduct s \Pull t \),
	one only needs to change \( \cosmos \) into \( \cosmos\op \). 
\end{proof}

\begin{remark}
	In the literature, the base change isomorphism
	is also called the `projection formula'
	or the `Beck-Chevalley condition'.
	Squares inducing a
	base change isomorphism are called \emph{exact} by Guitart.
\end{remark}

\section{The proof}

\subsection{Composition of base change transformations}

Given two adjacent squares
\[
	\begin{tikzcd}
		\category E \rar["k"] \dar["l" swap]
			& \category A \rar["s"] \dar["t"]
			\ar[dl, phantom, "\rotatebox{220}{\( \Longrightarrow \)}"]
				& \category B \dar["f"]
				\ar[dl, phantom, "\rotatebox{220}{
				\( \Longrightarrow \)}"] \\
		\category F \rar["r" swap]
			& \category C \rar["g" swap]
				& \category D
	\end{tikzcd}
\]
one can paste the two natural transformations
\( (fs \implies gt)k \) and \( g(tk \implies rl) \) 
into a natural transformation from \( fsk \) to \( grl \).

\begin{lemma}[{\cite[2.2]{doi:10.1007/Bfb0063101}}]
	\label{thm: composition of base change natural transformations}
	Base change natural transformations can be composed along
	adjacent squares: given two adjacent squares
	the base change transformation
	\( \PushSum l \Pull {(sk)} \implies \Pull {(gr)} \PushSum f \) 
	is the composite of
	\( (\PushSum l \Pull k \implies \Pull r \PushSum t) \Pull s \)
	and \( \Pull r (\PushSum t \Pull s \implies \Pull g \PushSum f) \).
\end{lemma}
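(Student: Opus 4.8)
The plan is to recognise each base change transformation as the \emph{mate} of the relevant pasting $2$-cell under the adjunctions \( \PushSum {} \adj \Pull {} \), and then to deduce the statement as the horizontal compatibility of the mate correspondence. Concretely, write \( \eta \) and \( \varepsilon \) for the various units and counits, so that \( \PushSum f \adj \Pull f \) has unit \( \eta_f \from \mathrm{id} \implies \Pull f \PushSum f \), \( \PushSum t \adj \Pull t \) has counit \( \varepsilon_t \from \PushSum t \Pull t \implies \mathrm{id} \), and so on. I would first unfold all three base change transformations — the left one \( \PushSum l \Pull k \implies \Pull r \PushSum t \), the right one \( \PushSum t \Pull s \implies \Pull g \PushSum f \), and the outer one \( \PushSum l \Pull{(sk)} \implies \Pull{(gr)} \PushSum f \) — into their defining composites of a unit, a whiskered induced $2$-cell, and a counit, exactly as in the definition of the base change natural transformation.

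The crux of the comparison is an accounting of which units and counits appear. The outer base change uses only \( \eta_f \) and \( \varepsilon_l \), whereas the proposed composite — the left base change whiskered by \( \Pull s \), followed by the right base change whiskered by \( \Pull r \) — additionally introduces the unit \( \eta_t \) and the counit \( \varepsilon_t \) of the middle adjunction \( \PushSum t \adj \Pull t \). The heart of the proof is therefore to show that these two middle contributions cancel. I would do this by a diagram chase: first use the interchange law, that is naturality of \( \eta_t \) and \( \varepsilon_t \) against the pieces \( \varepsilon_l \), \( \eta_f \) acting on the disjoint \( l \)- and \( f \)-parts, to bring the two middle occurrences adjacent; then apply the appropriate triangle identity for \( \PushSum t \adj \Pull t \) to delete them; and finally use that the induced $2$-cell \( \Pull{(sk)}\Pull f \implies \Pull l \Pull{(gr)} \) of the pasted natural transformation \( (g\alpha)(\beta k) \) factors, by functoriality of \( \Pull{(-)} \), as the induced $2$-cell of \( \beta \) whiskered by \( \Pull k \) followed by that of \( \alpha \) whiskered by \( \Pull g \). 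These two factors are precisely what survives the cancellation.

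The main obstacle is purely the bookkeeping: keeping track of the many whiskerings by \( \Pull s \), \( \Pull r \), \( \PushSum t \) and \( \Pull t \), and selecting the correct one of the two triangle identities. Conceptually nothing deep happens beyond the naturality squares and that single triangle identity, which is exactly why the statement is an instance of the general calculus of mates: the base change transformation is by construction the mate of the pasting $2$-cell, and mates of horizontal composites are horizontal composites of mates. One may therefore alternatively dispatch the lemma by citing this calculus directly, rather than performing the chase by hand.
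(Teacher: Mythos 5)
Your proof is correct, but note that the paper itself does not prove this lemma: it simply cites Guitart's article (loc.\ cit., 2.2), so your argument supplies what the paper outsources to the reference. What you give is the standard proof via the calculus of mates: each base change transformation is by construction the mate of the induced \( 2 \)\=/cell on pull-backs (\( \Pull s \Pull f \implies \Pull t \Pull g \) for the right square, \( \Pull k \Pull t \implies \Pull l \Pull r \) for the left one) under the adjunctions \( \PushSum f \adj \Pull f \), \( \PushSum t \adj \Pull t \), \( \PushSum l \adj \Pull l \), and the lemma is precisely the compatibility of taking mates with horizontal pasting (Kelly--Street); either the explicit chase or a citation of that calculus is a complete proof. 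Your accounting of the cancellation is the right one: the composite of the two whiskered base changes contains the extra unit \( \eta_t \) and counit \( \varepsilon_t \), which after interchange become adjacent as \( (\Pull t \varepsilon_t) \circ (\eta_t \Pull t) \) and disappear by the triangle identity for the right adjoint \( \Pull t \), while the two surviving induced cells compose, by functoriality of pull-back along composites, to the induced cell of the pasted natural transformation. One small bookkeeping caveat: to bring \( \eta_t \) and \( \varepsilon_t \) next to each other you must interchange them not only past \( \varepsilon_l \) and \( \eta_f \) but also past the two whiskered induced cells themselves, since \( \eta_t \) creates the copy of \( \Pull t \) consumed by the left square's induced cell, and the right square's induced cell creates the copy of \( \Pull t \) consumed by \( \varepsilon_t \); these moves are still instances of interchange between cells acting on disjoint positions of the composite, so nothing breaks, but the order of the shuffling is slightly more constrained than your sketch suggests.
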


\subsection{Composition of flow products}

Fibre products of categories can be composed along adjacent squares:
given a diagram of small categories,
\[
	\begin{tikzcd}
		\category E \rar \dar
			& \category A \rar \dar
				& \category B \dar \\
		\category F \rar
			& \category C \rar
				& \category D
	\end{tikzcd}
\]
if the left and right squares are both fibre products, then the
external rectangle is also a fibre product. This can be
summarised
\[
	\left( \category B \times_{\category D} \category C \right)
	\times_{\category C} \category F
		\isonat \category B \times_{\category D} \category F
\]
in a short formula. 

Similarly, flow products can be composed with fibre products, as
explained in the following straightforward lemma.

\begin{lemma}
	\label{thm: fibre product and flow product}
	If the right square is a flow product square, then the
	external rectangle is a flow product square if and only if
	the left square is a fibre product square.
	In particular, one has
	\[
		\left(
		\FlowProduct {\category B} {\category D} {\category C}
		\right)
		\times_{\category C} \category F
			\isonat
			\FlowProduct {\category B} {\category D} {\category F}
	\]
	in a short formula.
\end{lemma}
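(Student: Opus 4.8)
The plan is to establish the displayed isomorphism of categories by unwinding the two definitions, carefully tracking the projection functors and the lax 2-cells, and then to read off the ``if and only if'' from the universal properties of the flow product and the pullback.

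First I would fix notation. Write \( p \from \category F \to \category C \) for the lower horizontal functor of the left square. Because the right square is a flow product square, \cref{remark : flow product universal} identifies \( \category A \) with \( \FlowProduct {\category B} {\category D} {\category C} \): the two functors out of \( \category A \) are the projections \( \pi_{\category B} \) and \( \pi_{\category C} \), and the lax 2-cell \( \alpha \from f \pi_{\category B} \implies g \pi_{\category C} \) is given objectwise by the structure map, so that at \( (b,c,\eta) \) it is \( \eta \from f(b) \to g(c) \). Consequently the cospan of the external rectangle, obtained by composing the lower and right legs, is \( \category B \xrightarrow{f} \category D \xleftarrow{gp} \category F \).

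Next I would compute the fibre product \( \category A \times_{\category C} \category F \) explicitly. Its objects are pairs \( \left( (b,c,\eta), z \right) \) subject to \( c = p(z) \), equivalently triples \( (b,z,\eta) \) with \( b \) in \( \category B \), \( z \) in \( \category F \) and \( \eta \from f(b) \to g(p(z)) \); its morphisms are pairs \( (\phi, \zeta) \) with \( \phi \from b \to b' \) in \( \category B \) and \( \zeta \from z \to z' \) in \( \category F \) whose image \( p(\zeta) \) makes the defining square of the flow product commute. Comparing this term by term with the flow product \( \FlowProduct {\category B} {\category D} {\category F} \) formed along the cospan \( \category B \xrightarrow{f} \category D \xleftarrow{gp} \category F \) — whose objects are triples \( (b,z,\eta) \) with \( \eta \from f(b) \to (gp)(z) \) and whose morphisms are pairs \( (\phi,\zeta) \) making the corresponding square commute — one sees that the evident relabeling is an isomorphism of categories. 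I would then check that it carries the structure of the external rectangle to the canonical flow product structure: the functor \( \category E \to \category B \) is \( \pi_{\category B} q_{\category A} \), sending \( (b,z,\eta) \) to \( b \); the functor \( \category E \to \category F \) is \( q_{\category F} \), sending \( (b,z,\eta) \) to \( z \); and the external 2-cell, the paste of \( \alpha \) with the (strict, identity) 2-cell of the pullback, is objectwise \( \eta \), exactly the canonical 2-cell of \( \FlowProduct {\category B} {\category D} {\category F} \). Thus the external rectangle, with its pasted lax structure, is precisely the canonical flow product square of the composite cospan.

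Finally, the equivalence follows formally. Both being a flow product square and being a fibre product square are characterised up to canonical isomorphism by a universal property (\cref{remark : flow product universal} and the universal property of the pullback in \( \mathrm{Cat} \)), and the isomorphism above respects projections and 2-cells. Hence, assuming the right square is a flow product square: if the left square is a fibre product square then \( \category E \isonat \category A \times_{\category C} \category F \isonat \FlowProduct {\category B} {\category D} {\category F} \) exhibits the external rectangle as a flow product square; conversely, if the external rectangle is a flow product square then \( \category E \isonat \FlowProduct {\category B} {\category D} {\category F} \isonat \category A \times_{\category C} \category F \) exhibits the left square as a fibre product square. The only point that genuinely needs care — everything else being a routine unwinding of the two definitions — is the bookkeeping of the lax 2-cells: one must confirm that pasting the comma 2-cell \( \alpha \) with the identity 2-cell of the pullback reproduces the structure map of \( \FlowProduct {\category B} {\category D} {\category F} \), and in particular that the correct leg \( \category F \xrightarrow{gp} \category D \) is used.
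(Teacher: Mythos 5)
The paper offers no proof of this lemma at all --- it is introduced as a ``straightforward lemma'' --- so your write-up is filling in an omitted routine argument, and most of it is exactly the intended unwinding. The identification of \( \left( \FlowProduct {\category B}{\category D}{\category C} \right) \times_{\category C} \category F \) with \( \FlowProduct {\category B}{\category D}{\category F} \) --- objects as triples \( (b,z,\eta) \) with \( \eta \from f(b) \to g(p(z)) \), morphisms as pairs \( (\phi,\zeta) \), and the pasted 2-cell being objectwise \( \eta \) --- is correct, and it does prove the displayed formula as well as the ``if'' direction: when the left square is a fibre product square, the external rectangle is canonically isomorphic, projections and 2-cells included, to the canonical flow product square of the cospan \( \category B \xrightarrow{f} \category D \xleftarrow{gp} \category F \).

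The ``only if'' direction, however, is asserted rather than proved, and as written it has a gap. Write \( k \from \category E \to \category A \) and \( l \from \category E \to \category F \) for the legs of the left square and \( \pi_{\category A}, \pi_{\category F} \) for the projections of \( \category A \times_{\category C} \category F \). If the external rectangle is a flow product square, its universal property produces an isomorphism \( \Psi_0 \from \category E \to \FlowProduct{\category B}{\category D}{\category F} \) that is only known to commute with the functors to \( \category B \) and to \( \category F \) and with the 2-cells; composing with your comparison isomorphism yields \( \Psi \from \category E \to \category A \times_{\category C} \category F \) satisfying \( \pi_{\category F}\Psi = l \) and \( s\,\pi_{\category A}\Psi = sk \), but this does not yet give \( \pi_{\category A}\Psi = k \), which is what ``exhibits the left square as a fibre product square'' requires: the projection \( s \) of the flow product \( \category A \) to \( \category B \) is not injective, so agreement after composing with \( s \) is not enough. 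The repair uses the hypothesis on the right square once more: by the universal property of \( \category A \isonat \FlowProduct{\category B}{\category D}{\category C} \)~[\ref{remark : flow product universal}], a functor into \( \category A \) is uniquely determined by its composites with \( s \) and \( t \) together with the whiskered 2-cell. Both \( k \) and \( \pi_{\category A}\Psi \) have composite \( sk \) with \( s \), composite \( tk = pl \) with \( t \), and whiskered 2-cell equal to the 2-cell of the external rectangle, so they coincide; hence \( \Psi \) respects both legs of the left square and, being an isomorphism, shows that square is a fibre product square. This is a one-line fix, but it --- and not the 2-cell bookkeeping you single out at the end --- is the genuinely delicate point of the converse.
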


\begin{remark}
	\label{rmk: fibre product and flow product}
	One has the same result
	\[
		{\category F} \times_{\category B}
		\left(
			\FlowProduct {\category B} {\category D} {\category C}
		\right)
			\isonat
			\FlowProduct {\category F} {\category D} {\category C}
	\]
	if one would have stacked
	the fibre product square above the flow product square.
	Similarly, one has a dual result with
	flow sums and the disjoint union of small categories.
\end{remark}

\subsection{Pointwise base change}

\begin{definition}
	Let \( f \from \category B \to \category D \) be a functor
	between two small categories and let \( d \) be an object of
	\( \category B \).
	The \emph{fibre} \( f^{-1}(d) \)
	of \( f \) at \( d \)
	is the fibre product 
	\[
		\begin{tikzcd}[ampersand replacement=\&]
			f^{-1}(d)
			\arrow[r, ""]
			\arrow[d, "",swap]
			\arrow[rd, very near start, phantom, "\lrcorner"]
			\& \category B
			\arrow[d, "f"] \\
			\ast
			\arrow[r, "d", swap]
			\& \category D
		\end{tikzcd}
	\]
	while the \emph{flow} \( \Flow f d \) of \( f \) to \( d \) is
	\[
		\begin{tikzcd}[ampersand replacement=\&, column sep = large]
			\Flow f d
			\arrow[r]
			\arrow[d]
			\arrow[rd, very near start, phantom, "\lrcorner"]
			\& \category B
			\ar[dl, phantom, "\rotatebox{220}{\( \Longrightarrow \)}"]
			\arrow[d, "f"] \\
			\ast
			\arrow[r, "d", swap]
			\&  \category D
		\end{tikzcd}
	\]
	the flow product of the same cospan.
	Notice that by definition,
	the fibre \( f^{-1}(d) \) is a full subcategory of the flow
	\( \Flow f d \).
\end{definition}

\begin{remark}
	The flow of \( f \) to \( d \)  is also denoted as a slice
	\( \slice f d \) in the literature.
\end{remark}

\begin{definition}
	A functor \( f \from \category A \to \category B \) is said to be
	cofinal
	if any diagram of shape \( \category B \) valued in any category
	$\category W$ can be
	restricted to a diagram of shape \( \category A \)
	along \( f \) without changing its colimit.
	This happens exactly when the flow
	\( \FlowProduct d {\category D} {\category B} \) of \( f \) from
	\( d \) 
	is connected, for every object \( d \in \category D \).

	When \( f \) is cofinal, the canonical map
	\[
		\PushSum \proj \PushSum f \Pull f \implies \PushSum \proj
	\]
	is an isomorphism, where \( \proj \from \category B \to \ast \) is
	the canonical map to the punctual category.
\end{definition}

\begin{definition}
	Let \( f \from \category A \to \category B \) be a functor.
	An arrow \( \phi \from a \to a' \) in \( \category A \) is
	\( f \)\=/cocartesian if for every other arrow
	\( \phi' \from a \to a'' \) such that
	\( f(\phi') = \psi \circ f(\phi) \), there exists a unique
	lift \( \widetilde \psi \) of \( \psi \) such that
	\( \phi' = \widetilde \psi \circ \phi \). 

	The functor \( f \) is called an \emph{opfibration} if for
	every \( a \) in \( \category A \) and every
	\( \phi \from f(a) \to b \), there exists a \( f \)\=/cocartesian
	lifting \( \widetilde\phi \from a \to a' \) of \( \phi \).
\end{definition}

\begin{lemma}
	\label{thm: cofinality for opfibrations}
	When \( f \from \category B \to \category D \) is an
	opfibration, the inclusion
	\( f^{-1}(d) \subset \Flow f d \) is cofinal.
\end{lemma}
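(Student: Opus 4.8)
The plan is to verify cofinality through the connectedness criterion recalled above: the inclusion \( \iota \from f^{-1}(d) \hookrightarrow \Flow f d \) is cofinal precisely when, for every object of \( \Flow f d \), the flow of \( \iota \) from that object is connected. So I would begin by unravelling this comma category. An object of \( \Flow f d \) is a pair \( (b, \eta) \) with \( b \) in \( \category B \) and \( \eta \from f(b) \to d \) a morphism of \( \category D \). Fixing such a pair, an object of the flow of \( \iota \) from \( (b,\eta) \) consists of an object \( x \) of the fibre \( f^{-1}(d) \)---equivalently an object \( x \) of \( \category B \) with \( f(x) = d \)---together with a morphism \( \phi \from b \to x \) of \( \category B \) satisfying \( f(\phi) = \eta \); this last condition is exactly the requirement that \( \phi \) define a morphism \( (b,\eta) \to (x, \mathrm{id}_d) \) in \( \Flow f d \). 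A morphism \( (x,\phi) \to (x',\phi') \) is then a morphism \( \sigma \from x \to x' \) of the fibre, that is one with \( f(\sigma) = \mathrm{id}_d \), such that \( \sigma \circ \phi = \phi' \).

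The heart of the argument is to exhibit an initial object in this comma category, which immediately yields connectedness. Here I would invoke the opfibration hypothesis: applied to the object \( b \) and the morphism \( \eta \from f(b) \to d \), it furnishes an \( f \)\=/cocartesian lift \( \widetilde\eta \from b \to b' \) with \( f(b') = d \) and \( f(\widetilde\eta) = \eta \). Since \( f(b') = d \), the object \( b' \) lies in \( f^{-1}(d) \), and \( \widetilde\eta \) satisfies \( f(\widetilde\eta) = \eta \), so \( (b', \widetilde\eta) \) is an object of the flow of \( \iota \) from \( (b,\eta) \); in particular this comma category is nonempty.

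It then remains to check that \( (b', \widetilde\eta) \) is initial. Given any object \( (x, \phi) \), a morphism \( (b',\widetilde\eta) \to (x,\phi) \) is a map \( \sigma \from b' \to x \) with \( f(\sigma) = \mathrm{id}_d \) and \( \sigma \circ \widetilde\eta = \phi \). I would produce it from the cocartesian property of \( \widetilde\eta \): the morphism \( \phi \from b \to x \) satisfies \( f(\phi) = \eta = \mathrm{id}_d \circ f(\widetilde\eta) \), so taking the comparison morphism to be \( \mathrm{id}_d \) in the definition of cocartesian supplies a unique lift \( \widetilde\psi \from b' \to x \) of \( \mathrm{id}_d \) with \( \phi = \widetilde\psi \circ \widetilde\eta \). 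This \( \widetilde\psi \) is the morphism sought, and the uniqueness clause of the cocartesian property gives exactly its uniqueness in the comma category. A category with an initial object is connected, so the criterion holds and \( \iota \) is cofinal.

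The step that I expect to require the most care is the dictionary in the last paragraph: matching the initiality condition against the cocartesian universal property, and checking in particular that the comparison morphism is forced to be \( \mathrm{id}_d \)---so that its lift lands in the fibre \( f^{-1}(d) \) rather than merely in \( \category B \)---and that uniqueness of the lift transports to uniqueness of the morphism in the comma category. The remaining verifications are routine bookkeeping with the definitions of the flow and the fibre.
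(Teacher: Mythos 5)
Your proposal is correct and follows essentially the same route as the paper's own proof: verify the connectedness criterion by using the opfibration hypothesis to produce a cocartesian lift of \( \eta \), which yields an initial object of the comma category of the inclusion under \( (b,\eta) \). The only difference is that you spell out the verification of initiality via the cocartesian universal property, which the paper leaves implicit.
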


\begin{proof}
	Let \( (b, \eta \from f(b) \to d) \) be an object
	of the flow of \( f \) to \( d \).
	We shall show that the flow of the inclusion from \( (b, \eta) \)
	(that is, it's coslice by \( (b, \eta) \))
	is connected.
	Since \( f \) is an opfibration, \( \eta \) admits
	a cocartesian
	lift \( \theta \from b \to e \) and we have thus a morphism
	\( \theta \from (b, \eta) \to (e, \id_d) \); it is initial
	and the flow from \( (b, \eta) \) is thus connected.
\end{proof}

\begin{lemma}
	\label{thm: point case}
	Consider a lax square
		\[
			\begin{tikzcd}[ampersand replacement=\&]
				\category A
				\arrow[r]
				\arrow[d]
					\&  \category B
					\ar[dl, phantom, "\rotatebox{220}{
					\( \Longrightarrow \)
					}"] \arrow[d, "f"] \\
				\ast
				\arrow[r, "d"']
					\& \category D
			\end{tikzcd}
		\]
	of small categories. The base change natural transformation is
	an isomorphism when either
	\begin{itemize}
		\item
			\( \category A \) is
			the flow \( \Flow f d \) of \( f \)
			to \( d \in \category D \);
		\item
			or \( \category A \) is the fibre \( f^{-1}(d) \) of
			\( f \) at
			\( d \in \category D \) and \( f \) is an opfibration.
	\end{itemize}
\end{lemma}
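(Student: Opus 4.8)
The plan is to first unwind both composites in this punctual situation, then dispatch the flow case---which is essentially the pointwise formula for the left Kan extension---and deduce the fibre case from it by cofinality. First I would identify the two functors \( \fun {\category B} \cosmos \to \cosmos \) being compared. Since \( t \from \category A \to \ast \) is the terminal projection, \( \PushSum t \) is the left Kan extension along the projection \( \proj \), that is, the colimit over \( \category A \); and since \( g = d \from \ast \to \category D \) selects the object \( d \), the functor \( \Pull g \) is evaluation at \( d \). The base change thus compares the functor sending \( F \) to \( \PushSum t \Pull s\, F \), the colimit over \( \category A \) of \( F \circ s \), with the functor sending \( F \) to \( \Pull g \PushSum f\, F = (\PushSum f F)(d) \), the value at \( d \) of the left Kan extension of \( F \) along \( f \).

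For the flow case, \( \category A = \Flow f d \), the functor \( s \) is the canonical projection to \( \category B \), and the lax \( 2 \)-cell is the tautological transformation whose component at an object \( (b, \eta) \) is \( \eta \from f(b) \to d \). With these identifications the statement is exactly the pointwise formula for the left Kan extension: since \( \cosmos \) is cocomplete, \( (\PushSum f F)(d) \) is computed as the colimit over the comma category \( \Flow f d = \slice f d \) of \( F \) composed with the projection. I would establish this by checking that the base change map---assembled from the unit of \( \PushSum f \adj \Pull f \), the \( 2 \)-cell \( \Pull s \Pull f \implies \Pull t \Pull g \), and the counit of \( \PushSum t \adj \Pull t \)---coincides with the canonical comparison map of the pointwise formula; tracing the universal cocone through the two adjunctions shows the two agree, whereupon the pointwise formula gives that it is invertible. (Alternatively, both composites preserve colimits in \( F \), so it suffices to check the isomorphism when \( F \) is a copower \( \relhom {\category B} b {-} \odot X \) of a representable functor; both sides then reduce to \( \relhom {\category D} {f(b)} d \odot X \).)

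For the fibre case, \( \category A = f^{-1}(d) \) and \( s \) is the inclusion, which factors as the composite \( f^{-1}(d) \to \Flow f d \to \category B \) of the full inclusion \( j \) and the projection \( \pi \). I would place the strictly commutative square \( f^{-1}(d) \to \Flow f d \to \ast \) immediately to the left of the flow square, so that these two horizontally adjacent squares have the fibre square as outer rectangle. By \cref{thm: composition of base change natural transformations} the base change map of the fibre square is then the composite of the base change map of the left square, whiskered by \( \Pull \pi \), with the base change map of the right (flow) square. The right one is an isomorphism by the flow case. The left square is strictly commutative with identity bottom edge and identity \( 2 \)-cell, so its base change map \( \PushSum \proj \PushSum j \Pull j \implies \PushSum \proj \) is precisely the cofinality comparison for \( j \); since \( f \) is an opfibration, \( j \) is cofinal by \cref{thm: cofinality for opfibrations}, and this map is an isomorphism as well. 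Being the composite of two isomorphisms, the base change map of the fibre square is invertible, which settles the fibre case. The main obstacle is the flow case, specifically the bookkeeping that identifies the abstractly defined base change transformation with the canonical comparison of the pointwise Kan extension formula; once that identification is in hand the fibre case is purely formal.
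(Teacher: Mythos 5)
Your proposal is correct and follows essentially the same route as the paper: the flow case is exactly the pointwise formula for left Kan extensions (the paper cites Mac Lane X.5.3 for this), and the fibre case is deduced from it by combining the cofinality of the inclusion \( f^{-1}(d) \subset \Flow f d \) for opfibrations with the composition lemma for base change transformations. Your write-up merely makes explicit the horizontal pasting of squares and the identification of the left square's base change map with the cofinality comparison, details the paper leaves implicit.
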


\begin{proof}
	The first case comes from the punctual evaluation of
	\( \PushSum f \) 
	\[
		\PushSum f F(d)
		\isonat \textstyle\colimover{(b, f(b) \to d)} F(b)
	\]
	where \( F \) is any functor from \( \category B \) to
	\( \cosmos \);
	this is a classic result of category theory and can
	be found for example in
	\emph{Categories for the Working Mathematician}
	\cite[X.5.3]{doi:10.1007/978-1-4757-4721-8}.
	The second case follows from the first case by cofinality%
	~[\ref{thm: cofinality for opfibrations}]
	and composition of base change natural transformations%
	~[\ref{thm: composition of base change natural transformations}].
\end{proof}

\subsection{Flow product case}

We are now ready to give the proof of the general case for the
flow product. Consider the following
\[
	\begin{tikzcd}[ampersand replacement=\&]
		\FlowProduct {\category B} {\category D} {\category C}
		\arrow[r, "s"]
		\arrow[d, "t" swap]
		\arrow[rd, very near start, phantom, "\lrcorner"]
			\& \category B
			\ar[dl, phantom, "\rotatebox{220}{\( \Longrightarrow \)}"]
			\arrow[d, "f"] \\
		\category C
		\arrow[r, "g", swap]
			\&  \category D
	\end{tikzcd}
\]
flow product square.

\begin{lemma}
	The projection \( t \from
	\FlowProduct {\category B} {\category D} {\category C}
	\to {\category C} \)
	is an opfibration.
\end{lemma}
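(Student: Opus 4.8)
The plan is to unwind the definition of opfibration and simply exhibit, for each object and each outgoing morphism in \( \category C \), a canonical cocartesian lift, then confirm its universal property by a short diagram chase. Recall that \( t \) sends a triple \( (b, c, \eta) \) to \( c \) and a morphism \( (\phi, \psi) \) to \( \psi \); so I must produce, for every object \( (b, c, \eta) \) and every arrow \( \psi \from c \to c' \) in \( \category C \), a \( t \)\=/cocartesian arrow of the flow product lying over \( \psi \).

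First I would write down the candidate. Since \( t \) records only the \( \category C \)\=/component, the economical guess is to leave the \( \category B \)\=/component untouched: take as target the triple \( (b, c', g(\psi) \circ \eta) \) and as lift \( \widetilde\psi \coloneqq (\id_b, \psi) \). This is a well-defined morphism of \( \FlowProduct {\category B} {\category D} {\category C} \) because the defining square commutes on the nose — its two composites are \( \eta' \circ f(\id_b) = g(\psi)\circ \eta \) and \( g(\psi)\circ \eta \), equal by construction — and clearly \( t(\widetilde\psi) = \psi \), so \( \widetilde\psi \) genuinely lies over \( \psi \). Next I would verify cocartesianness. Let \( \Phi = (\phi'', \chi) \from (b,c,\eta) \to (b'', c'', \eta'') \) be any morphism whose image factors as \( \chi = \chi' \circ \psi \). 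Any lift of \( \chi' \) starting at \( (b, c', g(\psi)\circ \eta) \) must have the form \( (\phi''', \chi') \); composing with \( \widetilde\psi \) yields \( (\phi''', \chi) \), so the requirement \( \Phi = \widetilde{\chi'} \circ \widetilde\psi \) forces \( \phi''' = \phi'' \) and thereby pins the lift down uniquely as \( \widetilde{\chi'} \coloneqq (\phi'', \chi') \).

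The single remaining point — and the only place anything could go wrong — is to check that \( \widetilde{\chi'} = (\phi'', \chi') \) really is a morphism \( (b, c', g(\psi)\circ\eta) \to (b'', c'', \eta'') \) of the flow product, i.e.\ that its commuting square \( \eta'' \circ f(\phi'') = g(\chi') \circ g(\psi) \circ \eta \) holds. Rewriting the right-hand side as \( g(\chi' \circ \psi) \circ \eta = g(\chi) \circ \eta \), this equation is precisely the square expressing that the given \( \Phi \) is a morphism of the flow product, which holds by hypothesis. Thus the candidate lift is legitimate, its existence and uniqueness are settled, and \( \widetilde\psi \) is cocartesian; as this argument is uniform over all objects and all outgoing morphisms in \( \category C \), the projection \( t \) is an opfibration. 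I do not anticipate any genuine obstacle: the lift is forced by the shape of the comma category, and the one nontrivial verification coincides verbatim with the defining condition on \( \Phi \), so the whole proof collapses to this observation.
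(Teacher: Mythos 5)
Your proof is correct and takes essentially the same approach as the paper: both exhibit the canonical cocartesian lift \( (\id_b, \psi) \from (b, c, \eta) \to (b, c', g(\psi) \circ \eta) \). The only difference is that the paper stops after naming this lift, whereas you also carry out the routine verification of the universal property, which, as you note, reduces to the commuting square expressing that the given morphism is a morphism of the flow product.
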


\begin{proof}
	For any object \( (b, c, \eta \from f(b) \to g(c)) \)
	in \( \FlowProduct {\category B} {\category D} {\category C} \)
	and any map
	\( \phi \from c \to c' \) in \( \category C \),
	a cocartesian lifting of \( \phi \)
	is given by the map
	\( (\id_b, \phi) \from (b, c, \eta)
	\to (b, c', g(\phi) \circ \eta) \).
\end{proof}

\begin{proposition}%
	\label{thm: flow product case}
	In the case of a flow product square as above, the base change
	natural transformation is an isomorphism.
\end{proposition}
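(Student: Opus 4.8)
The plan is to reduce to the punctual case treated in \cref{thm: point case} by detecting the base change isomorphism pointwise over \( \category C \). Recall that a natural transformation between two functors \( \category C \to \cosmos \) is an isomorphism if and only if it becomes one after evaluation at every object \( c \) of \( \category C \); since evaluation at \( c \) is computed by \( \Pull c \) along the functor \( c \from \ast \to \category C \), it suffices to prove that \( \Pull c (\PushSum t \Pull s \implies \Pull g \PushSum f) \) is an isomorphism for every \( c \).

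First I would pre-compose the flow product square with the point \( c \from \ast \to \category C \), forming the pair of adjacent squares
\[
	\begin{tikzcd}[ampersand replacement=\&]
		\category E \rar["k"] \dar["l" swap]
			\& \FlowProduct {\category B} {\category D} {\category C} \rar["s"] \dar["t"]
			\ar[dl, phantom, "\rotatebox{220}{\( \Longrightarrow \)}"]
				\& \category B \dar["f"]
				\ar[dl, phantom, "\rotatebox{220}{\( \Longrightarrow \)}"] \\
		\ast \rar["c" swap]
			\& \category C \rar["g" swap]
				\& \category D
	\end{tikzcd}
\]
in which the left square is taken to be the fibre product, so that \( \category E = t^{-1}(c) \) is the fibre of \( t \) at \( c \). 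By \cref{thm: fibre product and flow product}, stacking this fibre product square beneath the flow product square produces a flow product square on the outside; concretely the outer rectangle is exactly the flow \( \Flow f {g(c)} \) over the point \( g(c) \in \category D \).

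I would then invoke \cref{thm: point case} twice. The outer rectangle is a flow square over \( \ast \to \category D \), so its base change transformation \( \PushSum l \Pull {(sk)} \implies \Pull {(gc)} \PushSum f \) is an isomorphism by the first bullet. The left square is a fibre product square with punctual base, and \( t \) is an opfibration by the preceding lemma, so its base change transformation \( \PushSum l \Pull k \implies \Pull c \PushSum t \) is an isomorphism by the second bullet; whiskering it by \( \Pull s \) keeps it an isomorphism.

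Finally I would apply \cref{thm: composition of base change natural transformations}, which presents the outer base change transformation as the composite of the whiskered left-square transformation \( (\PushSum l \Pull k \implies \Pull c \PushSum t) \Pull s \) and the whiskered right-square transformation \( \Pull c (\PushSum t \Pull s \implies \Pull g \PushSum f) \). Since the outer composite and the first factor are both isomorphisms, cancellation forces the second factor \( \Pull c (\PushSum t \Pull s \implies \Pull g \PushSum f) \) to be an isomorphism as well; as \( c \) was arbitrary, this is what we wanted. The points demanding care are purely organisational: one must confirm via \cref{thm: fibre product and flow product} that the outer rectangle is precisely the flow over \( g(c) \), and check that the orientations in the composition lemma line up so that the two whiskered factors really compose to the outer transformation rather than to some unrelated pasting. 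Once these are verified, the argument is entirely formal.
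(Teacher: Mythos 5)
Your argument is correct and is essentially the paper's own proof: pointwise reduction over \( \category C \), pulling back \( t \) along \( c \from \ast \to \category C \) so that the fibre \( t^{-1}(c) \) is identified with the flow \( \Flow f {g(c)} \) via \cref{thm: fibre product and flow product}, then both bullets of \cref{thm: point case} (using that \( t \) is an opfibration), and finally \cref{thm: composition of base change natural transformations} together with the two-out-of-three property. The only slip is terminological: the fibre product square is pasted horizontally to the \emph{left} of the flow product square (the situation of \cref{thm: fibre product and flow product}), not stacked beneath it, but your displayed diagram already has this right.
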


\begin{proof}
	In order to show that the base change natural transformation is
	an isomorphism, it is enough to show that it gives an isomorphism
	at each \( c \) in \( \category C \). By pulling-back \( t \)
	along \( c \from \ast \to \category C \)
	\[
		\begin{tikzcd}[ampersand replacement=\&, column sep = large]
			\Flow f {g(c)}
			\ar[r,"u"]
			\ar[d,"v" swap]
			\arrow[rd, very near start, phantom, "\lrcorner"]
				\&
				\FlowProduct {\category B} {\category D} {\category C}
				\arrow[rd, very near start, phantom, "\lrcorner"]
				\ar[r,"s"]
				\ar[d,"t" swap]
					\& \category B \ar[d,"f"]
					\ar[
						dl,
						phantom,
						"\rotatebox{220}{\( \Longrightarrow \)}"
					] \\
			\ast \ar[r,"c" swap]
				\& \category C \ar[r, "g"']
					\& \category D
		\end{tikzcd}
	\]
	one obtains an outer rectangle which is a flow product square%
	~[\ref{thm: fibre product and flow product}].
	As a consequence and since \( t \) is an opfibration,
	the base change natural transformations%
	\[
		\PushSum v \Pull u \implies \Pull c \PushSum t
		\qand
		\PushSum v \Pull u \Pull s \implies \Pull c \Pull g \PushSum f 
	\]
	are isomorphisms
	~[\ref{thm: point case}].
	Then using the composition of base change natural transformations%
	~[\ref{thm: composition of base change natural transformations}]
	and the 2-out-of-3 property of isomorphisms
	\[
		\Pull c \PushSum t \Pull s \implies \Pull c \Pull g \PushSum f
	\]
	is also an isomorphism as required.
\end{proof}
	
When \( f \) is an opfibration, one can replace the
flow product by the fibre product in order for the
base change to be an isomorphism.

\begin{proposition}[{\cite[11.6]{Joyal}}]
	\label{thm: opfibration case}
	Let
	\[
		\begin{tikzcd}[ampersand replacement=\&]
			\category C \times_{\category D} \category B
			\arrow[r, ""]
			\arrow[d, "",swap]
			\arrow[rd, very near start, phantom, "\lrcorner"]
			\& \category B
			\arrow[d, "f"] \\
			\category C
			\arrow[r, "g", swap]
			\& \category D
		\end{tikzcd}
	\]
	be a fibre product of small categories and assume \( f \) is
	an opfibration, then the base change natural transformation
	is an isomorphism.
\end{proposition}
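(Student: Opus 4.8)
The plan is to follow the proof of the flow product case~[\ref{thm: flow product case}] almost verbatim, replacing the flow product by the fibre product and the flow of $f$ by the fibre of $f$. Write $s \from \category C \times_{\category D} \category B \to \category B$ and $t \from \category C \times_{\category D} \category B \to \category C$ for the two projections. The first thing I would establish is that $t$ is an opfibration. Since $t$ is the pullback of $f$ along $g$, this is just the stability of opfibrations under base change: given an object $(c, b)$ of the fibre product and a morphism $\phi \from c \to c'$ of $\category C$, I would take an $f$\=/cocartesian lift $\psi \from b \to b'$ of $g(\phi)$ (which exists because $f$ is an opfibration) and check that $(\phi, \psi)$ is a $t$\=/cocartesian lift of $\phi$.

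As in~[\ref{thm: flow product case}], it suffices to prove that the base change transformation is an isomorphism after applying $\Pull c$ for each object $c \from \ast \to \category C$. I would pull back $t$ along $c$ to form
\[
	\begin{tikzcd}[ampersand replacement=\&, column sep = large]
		t^{-1}(c)
		\ar[r,"u"]
		\ar[d,"v" swap]
		\arrow[rd, very near start, phantom, "\lrcorner"]
			\& \category C \times_{\category D} \category B
			\arrow[rd, very near start, phantom, "\lrcorner"]
			\ar[r,"s"]
			\ar[d,"t" swap]
				\& \category B \ar[d,"f"] \\
		\ast \ar[r,"c" swap]
			\& \category C \ar[r, "g"']
				\& \category D
	\end{tikzcd}
\]
in which both squares are fibre products. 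Since fibre products compose, the outer rectangle is again a fibre product, and its upper-left corner is
\[
	\left( \category C \times_{\category D} \category B \right)
	\times_{\category C} \ast
		\isonat \category B \times_{\category D} \ast
		= f^{-1}(g(c)),
\]
so that the outer rectangle is precisely the fibre product exhibiting the fibre $f^{-1}(g(c))$.

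Now I would apply the point case~[\ref{thm: point case}] twice. Because $t$ is an opfibration and the left square is a fibre product, its second bullet gives that $\PushSum v \Pull u \implies \Pull c \PushSum t$ is an isomorphism. Because $f$ is an opfibration and the outer rectangle computes $f^{-1}(g(c))$, its second bullet gives that $\PushSum v \Pull u \Pull s \implies \Pull c \Pull g \PushSum f$ is an isomorphism. Composing base change transformations~[\ref{thm: composition of base change natural transformations}] and using the 2-out-of-3 property of isomorphisms then shows that $\Pull c \PushSum t \Pull s \implies \Pull c \Pull g \PushSum f$ is an isomorphism; as this holds for every $c$, the base change transformation $\PushSum t \Pull s \implies \Pull g \PushSum f$ is an isomorphism.

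The only genuinely new point — and the single place where the opfibration hypothesis on $f$ is used — is the verification that $t$ is an opfibration, that is, that $f$\=/cocartesian lifts survive base change along $g$. This is where I expect the main (if modest) effort to lie; everything downstream is formal and repeats the argument of~[\ref{thm: flow product case}], now routed through the second bullet of the point case rather than the first.
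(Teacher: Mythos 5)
Your proposal is correct and follows essentially the same route as the paper's own proof: pull back along each object \( c \from \ast \to \category C \), use that opfibrations are stable under pullback so that the punctual case~[\ref{thm: point case}] applies to both the left square and the outer rectangle, and conclude by composition of base change transformations~[\ref{thm: composition of base change natural transformations}] and 2-out-of-3. The only difference is one of detail: the paper cites the stability of opfibrations under pullback without proof, whereas you spell out the cocartesian-lift verification.
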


\begin{proof}
	Following the same line of thoughts, we pull-back along
	\( c \from \ast \to \category C \) and end up with
	\[
		\begin{tikzcd}
			f^{-1}(g(c))
			\arrow[rd, very near start, phantom, "\lrcorner"]
			\dar \rar
				& \category C \times_{\category D} \category B
				\arrow[rd, very near start, phantom, "\lrcorner"]
				\dar \rar
					& \category B \dar["f"] \\
			\ast
			\rar["c" swap]
				& \category C
				\rar["g" swap]
					& \category D
		\end{tikzcd}
	\]
	a pull-back rectangle and since the pull-back of an opfibration
	is again an opfibration, we can conclude using the punctual
	case.
\end{proof}

\subsection{Flow sum case}

As in the flow product case, we shall show that the base change
formula is a point-wise isomorphism by pulling back along each
object \( c \from \ast \to \category C \).

Given a lax square
\[
	\begin{tikzcd}[ampersand replacement=\&]
		\category A
		\arrow[r, "s"]
		\arrow[d, "t", swap]
			\&  \category B
			\ar[dl, phantom, "\rotatebox{220}{\( \Longrightarrow \)}"]
			\arrow[d, "f"] \\
		\category C
		\arrow[r, "g", swap]
			\&  \category D
	\end{tikzcd}
\]
and an object \( c \in \category C \), the functor \( s \) induces
a functor
\[
	\begin{tikzcd}
		\Flow t c \rar["s"] & \Flow f {g(c)} 
	\end{tikzcd}
\]
sending a pair \( (a, \phi \from t(a) \to c) \)
to the pair
\( (s(a), fs(a) \to gt(a) \xrightarrow{g(\phi)} g(c)) \).

\begin{lemma}
	\label{thm : cofinal functor flow sum}
	If \( \category D \) is the flow sum of the span
	\( \category C \leftarrow \category A \to \category B \), then
	the functor \( s \from \Flow t c \to \Flow f {g(c)} \) is cofinal
	for every \( c \in \category C \). 
\end{lemma}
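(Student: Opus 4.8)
The plan is to unwind the definition of cofinality into a connectedness statement about a comma category, and then to recognise that comma category as the set of word\=/representatives of a single arrow of the flow sum, so that connectedness falls out of the commutative\=/hammock description of equality of arrows~[\cref{fig: hammock}].

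First I would fix an object \( (b, \eta) \) of \( \Flow f {g(c)} \). Since \( f \) and \( g \) are the canonical inclusions of \( \category B \) and \( \category C \) into \( \category D = \FlowSum {\category B} {\category A} {\category C} \), the datum \( \eta \) is nothing but a morphism \( b \to c \) in the flow sum. By definition \( s \) is cofinal precisely when, for every such \( (b,\eta) \), the comma category \( (b, \eta) \mathbin{\downarrow} s \) (the flow of \( s \) from \( (b,\eta) \)) is connected. Using that \( s \) sends \( (a, \phi) \) to \( (s(a), \phi a) \), and that a morphism \( (b,\eta) \to (s(a), \phi a) \) in \( \Flow f {g(c)} \) is a map \( \gamma \from b \to s(a) \) in \( \category B \) with \( \phi a \gamma = \eta \), one finds that an object of \( (b,\eta)\mathbin{\downarrow} s \) is exactly a triple \( (a, \phi, \gamma) \), with \( a \) in \( \category A \), \( \phi \from t(a) \to c \) in \( \category C \) and \( \gamma \from b \to s(a) \) in \( \category B \), such that the word \( \phi a \gamma \) represents \( \eta \). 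In short, the objects of the comma category are precisely the words of the generating form that represent the arrow \( \eta \).

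Next I would read off the morphisms. A morphism \( (a,\phi,\gamma) \to (a',\phi',\gamma') \) of \( (b,\eta)\mathbin{\downarrow} s \) is a map \( \theta \from a \to a' \) in \( \category A \) with \( \phi' \circ t(\theta) = \phi \) and \( s(\theta) \circ \gamma = \gamma' \); these two equalities are exactly an instance of the generating relation \( \sim \) used to build the flow sum, that is, one step of a commutative hammock. Nonemptiness of the comma category is then immediate: since there are no arrows \( \category C \to \category B \) in the flow sum, every arrow \( b \to c \) crosses exactly once and so is represented by at least one such word. For connectedness, take two objects; they represent the same arrow \( \eta \), so by the hammock characterisation~[\cref{fig: hammock}] there is a commutative hammock joining their two words. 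Each rung of the hammock is again a word representing \( \eta \), hence an object of the comma category, and each connecting map \( \theta_i \) is a morphism of the comma category, oriented one way or the other according to its direction in the zigzag. Reading off the hammock therefore produces a zigzag in \( (b,\eta)\mathbin{\downarrow} s \) joining the two chosen objects, which is the desired connectedness.

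The single point that demands care --- and the crux of the whole argument --- is the dictionary between the two kinds of commutativity data: I must verify that the conditions \( \phi' \circ t(\theta) = \phi \) and \( s(\theta)\circ\gamma = \gamma' \) defining a comma\=/category morphism coincide on the nose with the generating relation of the flow sum, and that the non\=/transitivity of \( \sim \) is absorbed precisely by allowing the \( \theta_i \) to run in either direction along the hammock. Once this identification is made, the lemma is a direct translation of the hammock remark, and no colimit computation is needed here --- that analytic content has already been packaged into the cofinality criterion and the reductions set up earlier.
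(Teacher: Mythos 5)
Your proof is correct and follows essentially the same route as the paper's: identify the objects of the coslice \( (b,\eta) \downarrow s \) with word representatives of \( \eta \), identify its morphisms with the generating relation of the flow sum, and conclude connectedness from the commutative-hammock description of equality of arrows. The only difference is that you spell out the dictionary between coslice morphisms and hammock rungs explicitly, which the paper leaves implicit.
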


\begin{proof}
	Let \( b \) be an object of \( \category B \) and
	\( \psi a \phi \) be an arrow from \( b \) to \( c \) in
	the flow sum. We shall show that the flow of \( s \) from
	\( (b, \psi a \phi) \) is connected. 

	It is not empty since there is a morphism
	\( \phi \from (b, \psi a \phi) \to (s(a), \psi a) \). 
	Let \( \phi' \from (b, \psi a \phi) \to (s(a'), \psi'a') \)
	be another object of the flow.
	Since the two morphisms \( \psi a \phi \) and \( \psi' a' \phi' \)
	are equal in the flow sum,
	they are connected by a commutative hammock diagram%
	~[\cref{fig: hammock}].
	As a consequence, any two objects of the flow can be connected via
	a zigzag coming from a hammock diagram: the flow is connected.
\end{proof}

\begin{proposition}%
	\label{thm: flow sum case}
	If the lax square above is a flow sum square,
	the base change natural transformation is an isomorphism.
\end{proposition}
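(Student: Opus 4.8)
The plan is to follow the same strategy as in the flow product case and prove that the base change transformation \( \PushSum t \Pull s \implies \Pull g \PushSum f \) is a \emph{pointwise} isomorphism: it suffices to show that it becomes invertible after whiskering by \( \Pull c \) for every object \( c \in \category C \), since \( \Pull c \) is evaluation at \( c \) and a natural transformation of functors into \( \fun {\category C} \cosmos \) is invertible as soon as it is invertible at each object.

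Fixing such a \( c \), I would first pull \( t \) back along \( c \from \ast \to \category C \), producing the flow \( \Flow t c \) together with its flow product (comma) square over \( \category C \); by the flow product case~[\ref{thm: flow product case}] the associated base change \( \PushSum v \Pull u \implies \Pull c \PushSum t \) is an isomorphism. Pasting this comma square to the left of the given flow sum square and composing base change transformations~[\ref{thm: composition of base change natural transformations}] then exhibits the base change of the resulting outer rectangle — the one with apex \( \Flow t c \), top edge \( su \from \Flow t c \to \category B \) and bottom edge \( gc \from \ast \to \category D \) — as the composite of the comma square base change whiskered by \( \Pull s \) and the sought transformation whiskered by \( \Pull c \). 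Since the former factor is invertible, the two-out-of-three property reduces the problem to showing that this outer rectangle has invertible base change.

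To establish that, I would factor the very same outer rectangle in the other direction, through the flow \( \Flow f {g(c)} \): the comparison functor \( s \from \Flow t c \to \Flow f {g(c)} \) sits in a strictly commutative square over the identity of \( \ast \), followed by the flow product square presenting \( \Flow f {g(c)} \) over the point \( g(c) \). The second square has invertible base change by the punctual case~[\ref{thm: point case}], while the first one has invertible base change precisely because \( s \) is cofinal~[\ref{thm : cofinal functor flow sum}]: for a square whose bottom edge is an identity, the base change transformation is exactly the cofinality comparison \( \colimover{(a, \phi)} F(s(a)) \to \colimover{(b, \eta)} F(b) \) between the colimit over \( \Flow t c \) and the colimit over \( \Flow f {g(c)} \). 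Composing base change transformations~[\ref{thm: composition of base change natural transformations}] once more shows that the outer rectangle has invertible base change. This settles the pointwise isomorphism at \( c \), and as \( c \) was arbitrary the base change transformation is an isomorphism.

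The point that requires care — and which I regard as the main obstacle — is the verification that these two factorizations produce literally the same outer rectangle equipped with the same pasted 2-cell, so that the composition lemma may legitimately be applied along both decompositions. Unwinding the definitions, the top functor is \( (a, \phi) \mapsto s(a) \) in either case, and the pasted 2-cell at an object \( (a, \phi) \) is the composite \( fs(a) \to gt(a) \xrightarrow{g(\phi)} g(c) \) in either case; this coincidence is exactly the coherence built into the definition of the comparison functor \( s \), so the two pastings agree and the two applications of the composition lemma are compatible.
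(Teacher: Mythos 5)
Your proof is correct and takes essentially the same route as the paper: your two factorizations of the outer rectangle are exactly the front rectangle (left face plus front face) and back rectangle (back face plus right face) of the cube drawn in the paper's proof, with the same appeals to the punctual case, to the cofinality of \( s \from \Flow t c \to \Flow f {g(c)} \), and to composition of base change transformations, and your explicit check that the two pastings carry the same 2-cell is precisely what the paper encodes in the assertion that the cube's top and bottom faces commute. The only cosmetic difference is that you invoke the general flow product case for the left square where the paper cites the punctual case directly; both are legitimate since that square is a flow product square over a point.
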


\begin{proof}
	We want to show that the natural transformation
	\[
		\Pull c \PushSum t \Pull s \implies \Pull c \Pull g \PushSum f
	\]
	is an isomorphism for every \( c \in \category C \).
	For this, let us consider this cube
	\[
		\begin{tikzcd}[column sep = {60,between origins},
			row sep = {40,between origins}]
			\Flow t c
			\ar[rr]
			\ar[dd]
			\ar[rd]
				&
					& \Flow f {g(c)} \ar[rd] \ar[dd]
						& \\
				& \category A \ar[rr, crossing over]
					&
						& \category B \ar[dd, "f"] \\
			\ast
			\ar[rd, "c" swap]
			\ar[rr, equal]
				&
					& \ast \ar[rd, "g(c)"]
						& \\
				& \category C \ar[rr, "g" swap]
				\ar[from=uu, crossing over]
					&
						& \FlowSum {\category B} {\category A}
						{\category C}
		\end{tikzcd}
	\]
	whose top and bottom faces are commutative and whose other faces are
	lax commutative.
	
	Since base change for the left face is an isomorphism%
	~[\ref{thm: point case}],
	by the 2-out-of-3
	property of isomorphisms and the composition of base change natural
	transformations%
	~[\ref{thm: composition of base change natural transformations}],
	we need to show that the base change natural
	transformation is an isomorphism for the front rectangle made of
	the left face and the front face.
	
	Now, this rectangle is equal to
	the back rectangle made of the back face and the right face;
	by the previous lemma ~[\ref{thm : cofinal functor flow sum}],
	base change is an isomorphism for
	the back face and we know that it is also the case for the right
	face%
	~[\ref{thm: point case}],
	so it is an isomorphism for the back rectangle.
\end{proof}

\section{Functorial pull-push}

We give here a direct application of the base change isomorphism for
flow product squares and flow sum squares.

\begin{remark}
	Given two cospans
	\( \category A \to \category B \leftarrow \category C \)
	and 
	\( \category C \to \category D \leftarrow \category E \),
	using previous computations%
	~[\ref{thm: fibre product and flow product},
	\ref{rmk: fibre product and flow product}]
	we get canonical isomorphisms
	\[
		\FlowProduct {\category A} {\category B}
		{\left(
		\FlowProduct {\category C} {\category D} {\category E}\right)}
			\isonat
			\left(
				\FlowProduct {\category A} {\category B} {\category C}
			\right)
			\times_{\category C}
			\left(
				\FlowProduct {\category C} {\category D} {\category X}
			\right)
				\isonat
				\FlowProduct
				{\left(
				\FlowProduct {\category A} {\category B} {\category C}
				\right)}
				{\category D} {\category E}
	\]
	or in other words: the flow product is associative.
	Similarly, flow sums are also associative.
\end{remark}

\begin{definition}
	Let \( \CatSpan \) be the pseudo-category
	whose objects are the small
	categories and whose morphisms between two categories
	\( \category A \) and \( \category C \)
	are the spans
	\( \category A \leftarrow \category B \to \category C \). 
	Composition of 
	\( \category A \leftarrow \category B \to \category C \)
	with
	\( \category C \leftarrow \category D \to \category E \) is given
	by the span
	\( \category A \leftarrow
	\FlowProduct {\category B} {\category C} {\category D}
	\to \category E \).
\end{definition}

\begin{figure}
	\[
		\begin{tikzcd}[column sep = {60,between origins},
			row sep = {30,between origins}]
			& &
			\FlowProduct {\category B} {\category C} {\category D}
			\ar[dl] \ar[dr]
			& &
			\\
			& \category B
			\ar[dl] \ar[dr]
			& \Longrightarrow &
			\category D
			\ar[dl] \ar[dr]
			&
			\\
			\category A & &  \category C & & \category E
		\end{tikzcd}
	\]
	\caption{Composition of spans of categories.}
\end{figure}

\begin{remark}
	The associativity of the flow product
	insures that the composition of spans given
	in the definition is associative; it does not have a unit though.
	Using the associativity of flow sums, one could instead build
	a category whose morphisms are given by cospans.
\end{remark}

The assignment
	\[
		\left(\category A \xleftarrow{f} \category B
		\xrightarrow{g} \category C\right)
			\longmapsto
			\left(\fun {\category A} \cosmos
			\xrightarrow{\PushSum g \Pull f}
			\fun {\category C} \cosmos\right)
	\]
gives us a functor
	\[
		\begin{tikzcd}[column sep = 70]
			\CatSpan
			\rar["\category A \mapsto \fun {\category A} \cosmos"]
				& \BigCats
		\end{tikzcd}
	\]
from the pseudo-category of small categories and spans
to the (2,1)\=/category of categories,
functors and natural isomorphisms.

The same thing can be done using cospans as morphisms, in which
case one obtains a similar theorem.
One could also change the formula
\( \PushSum g \Pull f \) to \( \PushProduct g \Pull f \).

\subsection*{Acknowledgements}

Damien Lejay was supported by \IBS.
The authors would like to thank Rune Haugseng for useful discussions
and Denis-Charles Cisinski for pointing us to the articles of
Guitart and Maltsiniotis.


\bibliography{bib/dl}

\providecommand{\href}[2]{#2}\begingroup\raggedright\begin{thebibliography}{1}

\bibitem{zbMATH03779585}
René Guitart, `Relations et carrés exacts', {\em Annales des Sciences
  Mathématiques du Québec} {\bfseries 4} (1980) 103--125.

\bibitem{arXiv:1101.4144}
Georges Maltsiniotis, `Carrés exacts homotopiques, et dérivateurs', 2011.
\newblock \href{http://arxiv.org/abs/1101.4144}{{\color{arXiv}\ttfamily
  arXiv:1101.4144 [math.AT]}}.

\bibitem{arXiv:1712.06469}
David Gepner, Rune Haugseng, and Joachim Kock, `∞-Operads as analytic
  monads', 2017.
\newblock \href{http://arxiv.org/abs/1712.06469}{{\color{arXiv}\ttfamily
  arXiv:1712.06469 [math.AT]}}.

\bibitem{doi:10.1007/Bfb0063101}
G.~M. Kelly and Ross Street,
  \href{http://dx.doi.org/10.1007/BFb0063101}{`Review of the elements of
  2-categories',} in {\em Category Seminar}, Gregory~M. Kelly, ed.,
  pp.~75--103.
\newblock Springer Berlin Heidelberg, Berlin, Heidelberg, 1974.

\bibitem{doi:10.1007/978-1-4757-4721-8}
Saunders Mac~Lane, \href{http://dx.doi.org/10.1007/978-1-4757-4721-8}{{\em
  Categories for the Working Mathematician}}, vol.~5 of {\em Graduate Texts in
  mathematics}.
\newblock Springer New York, New York, NY, 1978.

\bibitem{Joyal}
André Joyal, `The theory of quasi-categories and its applications', 2008.
\newblock Lecture notes.

\end{thebibliography}\endgroup

\end{document}